\theoremstyle{plain}
\newtheorem{theorem}{Theorem}[section]
\newtheorem{corollary}[theorem]{Corollary}
\newtheorem{proposition}[theorem]{Proposition}
\newtheorem{claim}{Claim}
\theoremstyle{definition}
\newtheorem{remark}[theorem]{Remark}
\newtheorem{example}[theorem]{Example}
\renewcommand{\emptyset}{\varnothing}
\DeclareMathOperator{\uhr}{\upharpoonright}
\DeclareMathOperator{\st}{St} 
\DeclareMathOperator{\conv}{conv} 
\DeclareMathOperator{\N}{\mathbb{N}}
\DeclareMathOperator{\R}{\mathbb{R}}
\numberwithin{equation}{section}
\begin{document}

\title{Selections, Paraconvexity and PF-normality}

\thanks{The work of the author is based upon research supported by the
College of Agriculture, Engineering and Science of the University of KwaZulu-Natal, South Africa.}

\author{Narcisse Roland Loufouma Makala}

\address{School of Mathematics, Statistics and Computer Science, University of
KwaZulu-Natal,   Westville Campus, Private Bag X54001, Durban 4000, South Africa}

\email{{\color{magenta}roland@aims.ac.za}}

\subjclass[2010]{Primary 54C60, 54C65; Secondary 54D15.}

\keywords{Set-valued mapping, lower semi-continuous, selection,
paraconvexity, PF-normal space.}

\begin{abstract}
We prove a selection theorem for paraconvex-valued mappings defined on $\tau$-PF normal spaces. The method developed to prove this result is used to provide a general approach
to such selection theorems.
\end{abstract}

\date{\today}
\maketitle

\section{Introduction}
\label{section1}

Let $X$ and $Y$ be topological spaces, and let $2^Y$ be the family of all non-empty subsets of $Y$. Also, let
\[
\mathscr{F}(Y)=\{S\in 2^Y:S\ \text{is closed}\}\ \text{and}\ \mathscr{C}(Y)=\{S\in \mathscr{F}(Y):S\ \text{is compact}\}.
\]

A set-valued mapping $\varphi:X\to 2^Y$ is \emph{lower semi-continuous}, or \emph{l.s.c.}, if the set
\[
\varphi^{-1}(U)=\{x\in X: \varphi(x)\cap U\neq \emptyset\}
\]
is open in $X$ for every open $U\subset Y$. A single-valued mapping $f:X\to Y$ is a \emph{selection} for $\varphi:X\to 2^Y$ if $f(x)\in \varphi(x)$ for every $x\in X$.\medskip

Let $Y$ be a normed space. Throughout this paper, we will use $d$ to denote the metric on $Y$ generated by the norm of $Y$. Following \cite{michael2}, a subset $P$ of $Y$ is called
$\alpha$-\emph{paraconvex}, where $0\leq\alpha\leq1$, if whenever $r>0$ and $d(p,P)<r$ for some $p\in Y$, then
\[
d(q,P)\leq\alpha r\ \text{for all}\ q\in\conv(B_r(p)\cap P).
\]

Here, $B_r(x)=\{y\in Y:d(x,y)<r\}$, and $\conv(A)$ is the \emph{convex hull} of $A$. The set $P$ is called \emph{paraconvex} if it is $\alpha$-paraconvex for some $\alpha<1$. A closed set is $0$-paraconvex if and only if it is convex.\medskip

In the sequel, $\tau$ will denote an infinite cardinal number, and $w(Y)$ --- the topological weight of the space $Y$. Also, we will use $\mathscr{C}'(Y)=\mathscr{C}(Y)\cup\{Y\}$. \medskip

In \cite{michael}, E. Michael proved that if $X$ is paracompact and $Y$ is a Banach space, then every l.s.c.\ convex-valued mapping $\varphi:X\to\mathscr{F}(Y)$ has a continuous selection (see \cite[Theorem $3.2^{\prime\prime}$]{michael}). In \cite{michael2}, E. Michael generalized this result by replacing ``convexity" with ``$\alpha$-paraconvexity" for a fixed $\alpha<1$ (see \cite[Theorem 2.1]{michael2}); this generalization remains valid for $\tau$-paracompact normal spaces, see \cite[Theorem 3.2]{loufouma}. P.V. Semenov generalized Michael's paraconvex-valued selection theorem by replacing the constant $\alpha$ with a continuous function $f:(0,\infty)\to[0,1)$ satisfying a certain property called $(PS)$ (\emph{functional paraconvexity}, see \cite{semenov}); and D. Repov\v{s} and P.V. Semenov considered in \cite{repov-semenov} a function $\alpha_P:(0,\infty)\to[0,2)$ (called the \emph{function of nonconvexity}) associated to each nonempty subset $P\subset Y$, see \cite{repov-semenov,semenov} for the definition of these concepts. Also, they obtained several applications of selections for paraconvex-valued mappings, see \cite{repov-semenov2,repov-semenov3,repov-semenov4,repov-semenov5,repov-semenov6,semenov2}
and the monograph \cite{repov-semenov7}. The author has recently proved a $\tau$-collectionwise normal version of these results, i.e. when $X$ is $\tau$-collectionwise normal, $Y$ is a Banach space with $w(Y)\leq\tau$, and $\varphi$ is $\alpha$-paraconvex- and $\mathscr{C}'(Y)$-valued \cite[Theorem 2.1]{loufouma}. Let us explicitly remark that the proofs of these theorems utilize the fact that $\tau$-paracompactness and $\tau$-collectionwise normality are hereditary with respect to closed subsets.\medskip

We are now ready to state also the main purpose of this paper. Namely, we prove a paraconvex-valued selection theorem for $\mathscr{C}(Y)$-valued mappings defined on $\tau$-PF-normal spaces (Corollary \ref{pf-thm0}), see Section \ref{dmp-sel-thm} for the definitions of these spaces. The challenge in this particular case is that $\tau$-PF-normality is not hereditary with respect to closed subsets, hence the method used for the $\tau$-collectionwise normal spaces in \cite{loufouma} cannot be applied straightforward; the rest of the arguments are similar. In fact, using the property of mappings discussed in the next section, we prove a slightly more general result (Theorem \ref{pf-thm}) and derive from a common point of view all previous known results of paraconvex-valued selection theorems for l.s.c.\ mappings (see Examples \ref{example1} and \ref{cwn-exple}).

\section{The Dense Multi-selection Property}
\label{dmp}

For $\varepsilon>0$, a single-valued mapping $g:X\to Y$ to a metric space $(Y,d)$ is an \emph{$\varepsilon$-selection} for $\varphi:X\to 2^Y$, if $d(g(x),\varphi(x))<\varepsilon$, for every $x\in X$. Also, a set-valued mapping $\psi:X\to 2^Y$ is called a \emph{set-valued selection} (or a \emph{multi-selection}) for another set-valued mapping $\varphi:X\to 2^Y$ if $\psi(x) \subset\varphi(x)$, for every $x\in X$. We shall say that a mapping $\varphi:X\to 2^Y$ has the \emph{Dense Multi-selection Property}, or \emph{DMP} for short, where $(Y,d)$ is a metric space, if the following hold:
\begin{enumerate}
\item[$(i)$] $\varphi$ has an l.s.c.\ multi-selection $\psi:X\to\mathscr{C}(Y)$.
\item[$(ii)$] For every $\varepsilon>0$, a cozero-set $U\subset X$, and a continuous $\varepsilon$-selection $g:U\to Y$ for $\varphi\uhr U$, there exists an l.s.c.\ $\psi:U\to\mathscr{C}(Y)$ such that
\[
\psi(x)\subset\overline{\varphi(x)\cap B_{\varepsilon}(g(x))},\ x\in U.
\]
\end{enumerate}

We may consider open balls $B_{\varepsilon}(y)$ when $\varepsilon=\infty$. Thus,
$B_{\infty}(y)=Y$, and the DMP of $\varphi:X\to 2^Y$ can be simply expressed by saying that for every $0<\varepsilon\leq\infty$, a cozero-set $U\subset X$, and a continuous $\varepsilon$-selection $g:U\to Y$ for $\varphi\uhr U$, there exists an l.s.c.\ $\psi:U\to\mathscr{C}(Y)$ such that $\psi(x)\subset\overline{\varphi(x)\cap B_{\varepsilon}(g(x))}$, $x\in U$.

\begin{remark}
\label{remark1}
In the realm of normal spaces, cozero-sets coincide with open $F_{\sigma}$-sets. So, if $X$ is normal, $U\subset X$ is an open $F_{\sigma}$-set, and $\varphi:X\to\mathscr{F}(Y)$ has the DMP, then $\varphi\uhr U$ will automatically have the DMP.
\end{remark}

The following are some examples of mappings satisfying the DMP. In the first example, a space $X$ is \emph{$\tau$-paracompact} if it is Hausdorff and every open cover $\mathscr{U}$ of $X$, with $|\mathscr{U}|\leq\tau$, has a locally finite open refinement.

\begin{example}
\label{example1}
If $X$ is $\tau$-paracompact and normal, $(Y,d)$ is a complete metric space with $w(Y)\leq\tau$, and $\varphi:X\to\mathscr{F}(Y)$ is l.s.c., then $\varphi$ has the DMP.
\end{example}

\begin{proof}
Take a cozero-set $U\subset X$; i.e.\ an open $F_{\sigma}$-subset of X, and a continuous $\varepsilon$-selection $g:U\to Y$ for $\varphi\uhr U$. Note that $U$ is normal (see, for instance, \cite[Problem 2.7.2 (b)]{Engelking}) and it is also $\tau$-paracompact (see \cite[Proposition 3]{michael3}). By \cite[Theorem 1.1]{michael5} (see also \cite[Theorem 11.2]{choban}), the l.s.c.\ mapping $\Phi:U\to\mathscr{F}(Y)$ defined by $\Phi(x)=\overline{\varphi(x)\cap B_{\varepsilon}(g(x))}$, $x\in U$, admits an l.s.c.\ multi-selection $\psi:U\to\mathscr{C}(Y)$; i.e. $\varphi$ has the DMP.
\end{proof}

Note that in the special case of $\tau=\omega$, Example \ref{example1} implies that if $X$ is countably paracompact and normal, $(Y,d)$ is a separable complete metric space, and $\varphi:X\to\mathscr{F}(Y)$ is l.s.c., then $\varphi$ has the DMP.\medskip

A $T_1$-space $X$ is said to be \emph{$\tau$-collectionwise normal} if for every discrete collection $\mathscr{D}$ of closed subsets of $X$, with $|\mathscr{D}|\leq\tau$, there exists a discrete collection $\big\{U_D: D\in \mathscr{D}\big\}$ of open subsets of $X$ such that $D\subset U_D$ for every $D\in \mathscr{D}$. Following Nedev \cite{nedev}, for a normal space $X$ and a metric space $(Y,d)$, a mapping $\varphi:X\to\mathscr{F}(Y)$ has the \emph{Selection Factorization Property} (\emph{SFP}, for short) if for every closed subset $F$ of $X$ and every locally finite collection $\mathscr{U}$ of open subsets of $Y$ such that $\varphi^{-1}(\mathscr{U})=\{\varphi^{-1}(U):U\in\mathscr{U}\}$ covers $F$, there exists a locally finite open (in $F$) cover of $F$ which refines $\varphi^{-1}(\mathscr{U})$.

\begin{example}
\label{cwn-exple}
If $X$ is $\tau$-collectionwise normal, $(Y,d)$ is a complete metric space with $w(Y)\leq\tau$, and $\varphi:X\to\mathscr{C}'(Y)$ is l.s.c., then $\varphi$ has the DMP.
\end{example}

\begin{proof}
Let $U\subset X$ be a cozero-set. Then, $U$ is $\tau$-collectionwise normal as an $F_{\sigma}$-subset of $X$ \cite{sediva} (see also \cite[Problem 5.5.1 (b)]{Engelking}). Let $g:U\to Y$ be a continuous $\varepsilon$-selection for $\varphi\uhr U$. Define a mapping $\Omega:U\to\mathscr{F}(Y)$ by $\Omega(x)=\overline{B_{\varepsilon}(g(x))}$, $x\in U$. Then, $\Omega$ is proximal continuous in the sense of \cite{gutev}. Define another l.s.c.\ mapping $\Phi:U\to\mathscr{F}(Y)$ by $\Phi(x)=\overline{\varphi(x)\cap B_{\varepsilon}(g(x))}$, $x\in U$. Note that $\Phi$ is a multi-selection for $\Omega$, and $\Phi(x)\neq\Omega(x)$ implies that $\Phi(x)$ is compact. By \cite[Lemma 4.2]{gutev-al}, $\Phi$ has the SFP. Hence, by \cite[Proposition 4.1]{nedev}, there exists an l.s.c.\ $\psi:U\to\mathscr{C}(Y)$ such that
\[
\psi(x)\subset\Phi(x)=\overline{\varphi(x)\cap B_{\varepsilon}(g(x))},\ x\in U.
\]
Thus, $\varphi$ has the DMP.
\end{proof}

\section{A selection theorem for paraconvex-valued mappings with $\tau$-PF-normal domain}
\label{dmp-sel-thm}

Let $X$ be a topological space. The \emph{star} of a set $A\subset X$ with respect to a cover $\mathscr{U}$ of $X$ is the set $\st(A,\mathscr{U})=\bigcup\{U\in\mathscr{U}:A\cap U \neq\emptyset\}$. A cover $\mathscr{U}$ of $X$ is said to be a \emph{star-refinement} of
another cover $\mathscr{V}$ of $X$ if for each $U\in\mathscr{U}$, there is some $V\in\mathscr{V}$ such that $\st(U,\mathscr{U})\subset V$. A \emph{normal sequence} in a space $X$ is a sequence $\{\mathscr{U}_n:n\in\N\}$ of open covers of $X$ such that $\mathscr{U}_{n+1}$ is a star-refinement of $\mathscr{U}_n$, for each $n\in\N$. An open cover $\mathscr{U}$ of $X$ is called a \emph{normal cover} if $\mathscr{U}=\mathscr{U}_1$ for some normal sequence $\{\mathscr{U}_n:n\in\N\}$ of open covers of $X$. It is well known that an open cover $\mathscr{U}$ of a space $X$ is normal if and only if it admits a locally finite refinement consisting of cozero-sets \cite{morita}. A $T_1$-space $X$ is \emph{$\tau$-PF-normal} if every point-finite open cover $\mathscr{U}$ of $X$, with $|\mathscr{U}|\leq\tau$, is normal. A space $X$ is \emph{PF-normal} if it is $\tau$-PF-normal for every $\tau$; or if it is normal and every point-finite open cover of $X$ has a locally finite open refinement. Note that in the realm of normal spaces, $\tau$-PF-normal spaces coincide with $\tau$-pointwise-$\aleph_0$-paracompact spaces in Nedev's terminology \cite{nedev}; and PF-normal spaces coincide with point-finitely paracompact spaces in the sense of Kand\^{o} \cite{kando}. Every collectionwise normal
space is PF-normal (see \cite[Theorem 2]{michael4}) and every PF-normal space is obviously normal. However, none of these implications is invertible (see \cite[Examples 1 and 2]{michael4}). In contrast to collectionwise normality and paracompactness, PF-normality is not hereditary with respect to closed subsets (see \cite[p. 506, \S 4]{gutev-al}).
However, it was proved in \cite{yamauchi} that PF-normality is hereditary with respect to open $F_{\sigma}$-subsets (see \cite[Proposition 4.5]{yamauchi}). The PF-normal spaces were investigated in \cite{gutev-al,kando,michael4,smith}.\medskip

Let $\mathscr{F}_{\alpha}(Y)$ (resp.\ $\mathscr{C}_{\alpha}(Y)$) be the set of all $\alpha$-paraconvex members of $\mathscr{F}(Y)$ (resp.\ $\mathscr{C}(Y)$). We are going to prove the following theorem.

\begin{theorem}
\label{pf-thm}For a Banach space $Y$, with $w(Y)\leq \tau$, and $0\leq\alpha<1$, the following hold\textup{:}
\begin{itemize}
\item[(a)] Whenever $X$ is a $\tau$-PF-normal space, every mapping $\varphi:X\to\mathscr{F}_{\alpha}(Y)$ having the DMP has a continuous selection.
\item[(b)] There exists $\delta=\delta(\alpha)>0$ depending only on $\alpha$ such that if $X$ is a $\tau$-PF-normal space, $\varphi:X\to\mathscr{F}_{\alpha}(Y)$ has the DMP, and $g:X\to Y$ is a continuous $r$-selection for $\varphi$ for some $r>0$, then $\varphi$ has a continuous selection $f:X\to Y$ with $d(g(x),f(x))<\delta r$, for all $x\in X$.
\end{itemize}
\end{theorem}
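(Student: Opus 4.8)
The plan is to isolate a single approximation step and iterate it. Fix $\alpha'=\tfrac{1+\alpha}{2}\in(\alpha,1)$. The engine is the following Approximation Lemma: if $U\subset X$ is a cozero-set, $0<\varepsilon<\infty$, and $g:U\to Y$ is a continuous $\varepsilon$-selection for $\varphi\uhr U$, then for any prescribed small $\eta>0$ there is a continuous $h:U\to Y$ with $d(h(x),\varphi(x))<\alpha'\varepsilon$ and $d(g(x),h(x))<\varepsilon+\eta$, $x\in U$. Granting this, part (b) would follow immediately: starting from the given $r$-selection $g_0=g$ (with $U=X$, $\varepsilon_0=r$) one produces continuous $g_n$ that are $\varepsilon_n$-selections with $\varepsilon_n=(\alpha')^n r$ and $d(g_n,g_{n+1})<\varepsilon_n+\eta_n$; choosing $\eta_n<\tfrac{\alpha'-\alpha}{2}\varepsilon_n$ keeps the improvement valid and makes $\sum_n d(g_n,g_{n+1})$ a convergent geometric-type series bounded by $\delta r$ for an explicit $\delta=\delta(\alpha)$ (with $\alpha'=\tfrac{1+\alpha}{2}$ one gets $\delta=\tfrac{2}{1-\alpha}+\tfrac12$). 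Since $Y$ is complete, the $g_n$ converge uniformly to a continuous $f$ with $d(f(x),\varphi(x))=\lim_n\varepsilon_n=0$, whence $f(x)\in\overline{\varphi(x)}=\varphi(x)$, and $d(g(x),f(x))\le\sum_n d(g_n,g_{n+1})<\delta r$.

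I would prove the Approximation Lemma by letting both $\tau$-PF-normality and paraconvexity enter. First, $U$ is itself $\tau$-PF-normal (heredity w.r.t. cozero-sets, \cite{yamauchi}) and $\varphi\uhr U$ inherits the DMP (Remark \ref{remark1}); applying DMP$(ii)$ on $U$ yields an l.s.c.\ $\psi:U\to\mathscr{C}(Y)$ with $\psi(x)\subset\overline{\varphi(x)\cap B_\varepsilon(g(x))}$. The crucial move is to manufacture a point-finite cover of $U$: since $w(Y)\le\tau$ and metric spaces are paracompact, choose a locally finite open cover $\{G_s\}_{s\in S}$ of $Y$ with $|S|\le\tau$ and $\diam G_s<\eta$. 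Because each value $\psi(x)$ is compact it meets only finitely many $G_s$, so $\{\psi^{-1}(G_s)\}_{s\in S}$ is a point-finite open cover of $U$ of size $\le\tau$; by $\tau$-PF-normality it is a normal cover and admits a locally finite partition of unity $\{\lambda_s\}$ with $\lambda_s(x)>0\Rightarrow\psi(x)\cap G_s\neq\emptyset$. Picking $y_s\in G_s$, I would set $h(x)=\sum_s\lambda_s(x)y_s$, a locally finite and hence continuous convex combination.

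For the estimate, fix $x$ and $s$ with $\lambda_s(x)>0$, choose $q_s\in\psi(x)\cap G_s$ and then $p_s\in\varphi(x)\cap B_\varepsilon(g(x))$ with $d(p_s,q_s)<\eta$, so $d(y_s,p_s)<2\eta$. All the $p_s$ lie in $B_\varepsilon(g(x))\cap\varphi(x)$ and $d(g(x),\varphi(x))<\varepsilon$, so $\alpha$-paraconvexity applied with centre $g(x)$ and radius $\varepsilon$ gives $d(h'(x),\varphi(x))\le\alpha\varepsilon$ for $h'(x)=\sum_s\lambda_s(x)p_s\in\conv(B_\varepsilon(g(x))\cap\varphi(x))$; since $d(h(x),h'(x))<2\eta$ we obtain $d(h(x),\varphi(x))<\alpha\varepsilon+2\eta<\alpha'\varepsilon$. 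As $d(y_s,g(x))<\varepsilon+\eta$ for each active $s$ and balls are convex, $d(g(x),h(x))<\varepsilon+\eta$. This proves the lemma, and with it (b). The point worth stressing is that localising to the \emph{common} ball $B_\varepsilon(g(x))$ (via DMP$(ii)$) is exactly what lets paraconvexity act on the convex combination.

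For (a) the task reduces, via (b), to producing a single continuous map close to $\varphi$, and this is where I expect the main obstacle. Applying the point-finite-cover construction to the compact-valued multi-selection $\psi_0$ of DMP$(i)$ yields a continuous $g_0$, but — in contrast to the convex case — paraconvexity cannot absorb the uncontrolled diameters of $\psi_0(x)$ in one constant-radius step, so $g_0$ is only a $\rho_0(x)$-selection for the finite-valued, upper semicontinuous function $\rho_0(x)=d(g_0(x),\varphi(x))$. Since a $\tau$-PF-normal space need not be countably paracompact, $\rho_0$ need not admit a continuous majorant, so one cannot merely rerun the constant-radius iteration. The remedy is a variable-radius version: iterate with radii $\rho_n\le(\alpha')^n\rho_0$, applying DMP$(ii)$ not on all of $X$ but on cozero-subsets where the radius is dominated by a constant — legitimate precisely because $\tau$-PF-normality and the DMP persist on cozero-sets (\cite{yamauchi}, Remark \ref{remark1}), which is the very reason the DMP is phrased over cozero-sets rather than closed sets. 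Upper semicontinuity makes $\rho_0$ locally bounded, so $\sum_n d(g_n,g_{n+1})\le\tfrac{\rho_0}{1-\alpha'}$ converges locally uniformly and the limit is a continuous selection. Securing this cozero localisation and the variable-radius bookkeeping cleanly is, I expect, the technical heart of (a).
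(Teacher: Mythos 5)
Your argument for part (b) is essentially sound and close in spirit to the paper's: the paper also iterates a single improvement step, using the DMP to get an l.s.c.\ compact-valued $\psi_{n+1}(x)\subset\overline{\varphi(x)\cap B_{\gamma^{n}r}(f_n(x))}$ and then invoking Nedev's selection theorem for compact-valued l.s.c.\ mappings on $\tau$-PF-normal spaces to obtain a continuous $f_{n+1}(x)\in\overline{\conv(\psi_{n+1}(x))}$, after which $\alpha$-paraconvexity yields the contraction exactly as in your estimate. What you do differently is to unpack that cited selection step into an explicit point-finite cover $\{\psi^{-1}(G_s)\}$ of small mesh and a subordinated locally finite partition of unity; this is a correct and more self-contained realization of the same step (it is essentially how the quoted theorem is proved), at the cost of the extra $\eta$-bookkeeping.

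Part (a), however, is not a proof but an announcement of the difficulty, and the route you sketch hits a genuine obstruction at exactly the point you flag. To ``apply DMP(ii) on cozero-subsets where the radius is dominated by a constant'' you must first produce such cozero sets covering $X$: the natural candidates $\{x:\rho_0(x)<n\}$ (equivalently the sets $\varphi^{-1}(B_n(0))$) form an increasing countable open cover, and in a merely normal space one can insert cozero sets only after first finding a \emph{closed} cover $A_n$ inside it. Shrinking a countable increasing open cover to a closed cover is precisely countable metacompactness, which a $\tau$-PF-normal space need not enjoy (Dowker spaces can be collectionwise normal, hence PF-normal); and since $\tau$-PF-normality is not hereditary to closed subsets, you also cannot restart the induction on closed pieces. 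The paper's Proposition \ref{prop1} is the missing device: it takes the u.s.c.\ compact-valued multi-selection $\phi$ of the l.s.c.\ $\psi$ from DMP(i) supplied by Nedev's theorem, performs the shrinking inside $Y$ (where countable paracompactness is free, $Y$ being metrizable), and pulls it back through $\phi$ to obtain the closed sets $A_n=\phi^{-1}(F_n)\subset\varphi^{-1}(V_n)$, whence normality gives the cozero sets $U_n$. A second point your sketch leaves open is coherence: once you work on the pieces $U_n$, the partial selections must agree on an exhausting family $\overline{G}_n$ in order to glue to a continuous $f$; the paper arranges this by replacing $\varphi$ on $\overline{G}_n$ with the singleton $\{g_n(x)\}$ and verifying (Claim \ref{claim}) that the modified mapping retains the DMP, so that part (b) can be applied to it on $U_{n+1}$. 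Without these two ingredients your part (a) does not go through.
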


To prepare for the proof of Theorem \ref{pf-thm}, we need the following construction.

\begin{claim}
\label{claim}
Let $X$ be a space, $(Y,d)$ be a metric space, $\varphi:X\to 2^Y$ have the DMP, $A\subset X$ be closed and $U\subset X$ be a neighbourhood of $A$. If $g:U\to Y$ is a continuous selection for $\varphi\uhr U$, then the mapping $\varphi_g:X\to 2^Y$ defined by
\begin{displaymath}
\varphi_{g}(x)=\left\{\begin{array}{ll}
      \{g(x)\} & \textrm{if $x\in A$}\\
      \varphi(x) & \textrm{if $x\in X\setminus A,$}
      \end{array}\right.
\end{displaymath}
also has the DMP.
\end{claim}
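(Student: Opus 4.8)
The plan is to verify directly that $\varphi_g$ satisfies both conditions $(i)$ and $(ii)$ in the definition of the DMP, in each case by the \emph{same} gluing construction: graft the given selection $g$ onto a multi-selection of $\varphi$ supplied by the DMP of $\varphi$. Two preliminary observations drive everything. First, since $A\subseteq U$ and $g$ is a selection for $\varphi\uhr U$, we have $g(x)\in\varphi(x)$ for every $x\in U$; hence $\varphi_g$ is a multi-selection of $\varphi$, and $\varphi_g$, $\varphi$ agree on the open set $X\setminus A$. Second, $X\setminus A$ is open (as $A$ is closed) and $g^{-1}(O)$ is open in $X$ for every open $O\subseteq Y$ (as $g$ is continuous on the open set $U$); these are the only facts about $A$, $U$, $g$ that the lower semi-continuity computation will use.

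For condition $(i)$, let $\psi_0\colon X\to\mathscr{C}(Y)$ be an l.s.c.\ multi-selection of $\varphi$, furnished by the DMP of $\varphi$. I would define $\psi\colon X\to\mathscr{C}(Y)$ by $\psi(x)=\{g(x)\}$ for $x\in A$, by $\psi(x)=\psi_0(x)\cup\{g(x)\}$ for $x\in U\setminus A$, and by $\psi(x)=\psi_0(x)$ for $x\in X\setminus U$. Each value is nonempty and compact, and $\psi(x)\subseteq\varphi_g(x)$ holds in all three regions (using $g(x)\in\varphi(x)$ on $U$). The decisive point is that for every open $O\subseteq Y$ one checks the set identity $\psi^{-1}(O)=g^{-1}(O)\cup\big(\psi_0^{-1}(O)\setminus A\big)$, which is open because $g^{-1}(O)$, $\psi_0^{-1}(O)$ and $X\setminus A$ are open; hence $\psi$ is l.s.c. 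Keeping $g(x)$ inside $\psi(x)$ throughout $U$ is exactly what makes this preimage split so cleanly and thereby secures lower semi-continuity across the boundary of $A$, where $\psi$ collapses to the singleton $\{g(x)\}$.

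For condition $(ii)$, fix $\varepsilon>0$, a cozero-set $V\subseteq X$, and a continuous $\varepsilon$-selection $h\colon V\to Y$ for $\varphi_g\uhr V$. The key remark is that $h$ is automatically an $\varepsilon$-selection for $\varphi\uhr V$ as well: on $V\setminus A$ this is immediate since $\varphi_g=\varphi$ there, while on $V\cap A$ we have $d(h(x),g(x))<\varepsilon$ and $g(x)\in\varphi(x)$, whence $d(h(x),\varphi(x))<\varepsilon$. Applying the DMP of $\varphi$ to the cozero-set $V$ then yields an l.s.c.\ $\psi_0\colon V\to\mathscr{C}(Y)$ with $\psi_0(x)\subseteq\overline{\varphi(x)\cap B_{\varepsilon}(h(x))}$. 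Next I would introduce the open set $W=\{x\in V\cap U:d(h(x),g(x))<\varepsilon\}$, which contains $V\cap A$ and on which $g(x)\in\varphi_g(x)\cap B_{\varepsilon}(h(x))$, and define $\psi(x)=\{g(x)\}$ on $V\cap A$, $\psi(x)=\psi_0(x)\cup\{g(x)\}$ on $W\setminus A$, and $\psi(x)=\psi_0(x)$ on $V\setminus W$. One verifies as before that $\psi$ is l.s.c.\ (its preimages take the form $Q\cup\big(\psi_0^{-1}(O)\setminus A\big)$ with $Q=W\cap g^{-1}(O)$ open) and that $\psi(x)\subseteq\overline{\varphi_g(x)\cap B_{\varepsilon}(h(x))}$ on each of the three regions; the containment on $W\setminus A$ is precisely where the definition of $W$ is needed.

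The step I expect to be the main obstacle is the interaction of the two conditions with the closed set $A$. One is tempted to restrict $\varphi$ to $V\setminus A$ and apply the DMP there, but $V\setminus A$ need not be a cozero-set of $X$, so the DMP of $\varphi$ is \emph{not} directly available on it. This is exactly the local shadow of the non-hereditarity phenomenon the paper is concerned with, and it is circumvented by the key remark above: $h$ remains an $\varepsilon$-selection for $\varphi$ on the \emph{whole} cozero-set $V$, so the DMP is invoked on $V$ itself rather than on $V\setminus A$. The second delicate point is lower semi-continuity along $\partial A$: adjoining $g(x)$ to the fiber is what keeps $\psi$ l.s.c.\ there, yet $g(x)$ may only be adjoined where $g(x)\in B_{\varepsilon}(h(x))$, i.e.\ on $W$. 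The fact that $W$ is open and contains $V\cap A$ is exactly what reconciles these two competing requirements, and the analogous role in $(i)$ is played by $U$ itself.
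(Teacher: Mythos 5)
Your proposal is correct and follows the same strategy as the paper's own proof: note that an $\varepsilon$-selection for $\varphi_g\uhr V$ is automatically one for $\varphi\uhr V$, invoke the DMP of $\varphi$ on the whole cozero-set $V$ to get $\psi_0$, and then graft $g$ onto $\psi_0$ near $A$ to restore lower semi-continuity across the boundary of $A$. The one genuine difference is your open set $W=\{x\in V\cap U: d(h(x),g(x))<\varepsilon\}$, and it is an improvement: the paper's $\psi_g$ adjoins $\{g(x)\}$ on all of $(U\cap V)\setminus A$, but at such a point there is no reason to have $d(f(x),g(x))<\varepsilon$, so the asserted inclusion $\psi_g(x)\subset\overline{\varphi_g(x)\cap B_{\varepsilon}(f(x))}$ can fail there (e.g.\ when $g(x)\in\varphi(x)$ lies outside $B_{\varepsilon}(f(x))$). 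Restricting the adjunction of $g(x)$ to $W$, which is open and contains $V\cap A$, secures both the inclusion and lower semi-continuity simultaneously, and your explicit preimage identity $\psi^{-1}(O)=\bigl(W\cap g^{-1}(O)\bigr)\cup\bigl(\psi_0^{-1}(O)\setminus A\bigr)$ makes the l.s.c.\ verification cleaner than the paper's unproved assertion. You also verify condition $(i)$ of the DMP separately, which the paper omits. In short: same route, but your version closes a small gap in the published argument.
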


\begin{proof}
Let $V\subset X$ be a cozero-set and $f:V\to Y$ be a continuous $\varepsilon$-selection for $\varphi_g\uhr V$. Then, $f$ is also a continuous $\varepsilon$-selection for $\varphi\uhr V$. Since $\varphi$ has the DMP, there exists an l.s.c.\ $\psi:V\to\mathscr{C}(Y)$ such that $\psi(x)\subset\overline{\varphi(x)\cap B_{\varepsilon}(f(x))}$, $x\in V$. Define a mapping $\psi_g:V\to\mathscr{C}(Y)$ by
\begin{displaymath}
\psi_{g}(x)=\left\{\begin{array}{lll}
      \{g(x)\} & \textrm{if $x\in A\cap V$}\\
      \{g(x)\}\cup\psi(x) & \textrm{if $x\in (U\cap V)\setminus A$}\\
      \psi(x) & \textrm{if $V\setminus U$}.
      \end{array}\right.
\end{displaymath}
Then, $\psi_g$ is l.s.c.\ and $\psi_g(x)\subset\overline{\varphi_g(x)\cap B_{\varepsilon}(f(x))}$, $x\in V$. Hence, $\varphi_g$ has the DMP.
\end{proof}

We also need the following proposition which is a $\tau$-PF-normal version of
\cite[Proposition 2.2]{loufouma} in terms of DMP mappings. Recall that a set-valued mapping $\psi:X\to 2^Y$ is \emph{upper-semi continuous}, or \emph{u.s.c.}, if the set $\psi^{\#}(U)= \{x\in X:\psi(x)\subset U\}$ is open in $X$ for every open $U\subset Y$. Equivalently, $\psi$ is u.s.c.\ if $\psi^{-1}(F)$ is closed in $X$ for every closed subset $F\subset Y$.

\begin{proposition}
\label{prop1} Let $X$ be a $\tau$-PF-normal space, $Y$ be a completely metrizable space with $w(Y)\leq\tau$, $\{V_n:n\in\N\}$ be an increasing open cover of $Y$, and $\varphi:X\to\mathscr{F}(Y)$ have the DMP. Then, there exists an increasing closed cover
$\{A_n:n\in\N\}$ of $X$ such that $A_n\subset\varphi^{-1}(V_n)$, for every $n\in\N$.
\end{proposition}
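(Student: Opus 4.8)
The plan is to reduce the statement to an ordinary shrinking of a \emph{point-finite} open cover of $X$. Condition $(i)$ of the DMP supplies an l.s.c.\ compact-valued multi-selection $\psi:X\to\mathscr{C}(Y)$, and since $\psi(x)\subset\varphi(x)$ we always have $\psi^{-1}(V)\subset\varphi^{-1}(V)$ for open $V\subset Y$. The naive attempt --- pulling back the increasing cover $\{V_n\}$ to the increasing open cover $\{\psi^{-1}(V_n)\}$ of $X$ --- fails precisely because an increasing cover is the antithesis of a point-finite one, so $\tau$-PF-normality (and even the point-finite shrinking lemma) cannot be fed with it directly. The whole point will therefore be to first replace $\{V_n\}$ by a \emph{locally finite} open cover of $Y$ that still refines it, and then transport local finiteness in $Y$ into point-finiteness in $X$ through the compactness of the values of $\psi$.

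First I would fix a compatible metric $d$ on $Y$ and manufacture a telescoping exhaustion out of $\{V_n\}$. A routine distance-to-complement construction (based on $\{y\in Y:d(y,Y\setminus V_n)>1/n\}$ together with cumulative unions) yields an increasing open cover $\{G_n:n\in\N\}$ of $Y$ with $\overline{G_n}\subset G_{n+1}$, $\overline{G_n}\subset V_n$, and $\bigcup_n G_n=Y$. With the convention $G_{-1}=G_0=\emptyset$, the telescoping family $O_n=G_n\setminus\overline{G_{n-2}}$ is then an open cover of $Y$ with $O_n\subset G_n\subset V_n$; it covers, since $y$ lies in $O_{n_0}$ for the least $n_0$ with $y\in G_{n_0}$, and it is locally finite, since any $y$ lies in some $G_m$ and $G_m$ meets $O_n$ only for $n\le m+1$ (for $n\ge m+2$ one has $G_m\subset G_{n-2}\subset\overline{G_{n-2}}$, so $G_m\cap O_n=\emptyset$).

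Next I would pull this back by $\psi$. Put $W_n=\psi^{-1}(O_n)=\{x\in X:\psi(x)\cap O_n\ne\emptyset\}$. Each $W_n$ is open because $\psi$ is l.s.c.\ and $O_n$ is open; the family $\{W_n\}$ covers $X$ because $\psi$ is non-empty-valued and $\{O_n\}$ covers $Y$; and $W_n\subset\psi^{-1}(V_n)\subset\varphi^{-1}(V_n)$. The key gain is point-finiteness: for fixed $x$ the set $\psi(x)$ is compact, so by local finiteness of $\{O_n\}$ it meets only finitely many $O_n$, whence $x\in W_n$ for only finitely many $n$. Thus $\{W_n:n\in\N\}$ is a point-finite open cover of $X$ of cardinality $\le\omega\le\tau$.

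Finally, $X$ is normal (being $\tau$-PF-normal), so the shrinking lemma for point-finite open covers --- equivalently, regarding $\{W_n\}$ as a normal cover via $\tau$-PF-normality and passing to a shrinking of its locally finite cozero refinement --- yields a closed cover $\{D_n:n\in\N\}$ of $X$ with $D_n\subset W_n\subset\varphi^{-1}(V_n)$. Setting $A_n=\bigcup_{k\le n}D_k$ gives an increasing sequence of closed sets (finite unions) with $\bigcup_n A_n=X$, and $A_n\subset\bigcup_{k\le n}\varphi^{-1}(V_k)=\varphi^{-1}(V_n)$ because $V_k\subset V_n$ for $k\le n$, as required. The main obstacle is exactly the point-finiteness step: without the detour through a locally finite refinement of $\{V_n\}$ in $Y$ and the compactness of the values of $\psi$, the only cover of $X$ naturally at hand is the increasing one, to which neither $\tau$-PF-normality nor the point-finite shrinking lemma can be applied.
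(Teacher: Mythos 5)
Your proof is correct, but it follows a genuinely different route from the paper's. The paper first shrinks $\{V_n\}$ to an increasing closed cover $\{F_n\}$ of $Y$ (using normality and countable paracompactness of the metrizable $Y$), then invokes Nedev's theorem that on a $\tau$-PF-normal domain the l.s.c.\ compact-valued multi-selection $\psi$ admits a u.s.c.\ compact-valued multi-selection $\phi$, and finally takes $A_n=\phi^{-1}(F_n)$, which is closed precisely because $\phi$ is u.s.c.\ and $F_n$ is closed. You instead stay entirely on the l.s.c.\ side: you refine $\{V_n\}$ to a locally finite open cover $\{O_n\}$ of $Y$ by a telescoping exhaustion, use compactness of the values of $\psi$ to convert local finiteness in $Y$ into point-finiteness of the pullback $\{W_n=\psi^{-1}(O_n)\}$ in $X$, and then apply the point-finite shrinking lemma. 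Each step of yours checks out (in particular $\overline{H_k}\subset H_{k'}$ for $k'>k$ makes the exhaustion work, and the disjointness $G_m\cap O_n=\emptyset$ for $n\geq m+2$ gives local finiteness). What your argument buys is economy of hypotheses: you never use $\tau$-PF-normality beyond plain normality of $X$ (a countable point-finite open cover of a normal space is always shrinkable), nor completeness of the metric on $Y$, so you in effect prove a stronger statement; you also avoid the external appeal to Nedev's u.s.c.\ multi-selection theorem. What the paper's argument buys is brevity and uniformity with the rest of the machinery: the u.s.c.\ multi-selection $\phi$ is exactly the kind of object the $\tau$-PF-normal theory is built around, and pulling back closed sets through it makes the closedness of the $A_n$ immediate, at the cost of quoting a nontrivial theorem whose hypotheses ($\tau$-PF-normality, $w(Y)\leq\tau$, completeness) are then genuinely needed.
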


\begin{proof}
Since $\{V_n:n\in\N\}$ is an increasing open cover of $Y$ and $Y$ is normal and countably paracompact (being metrizable), there exists an increasing closed cover $\{F_n:n\in\N\}$ of $Y$ such that $F_n\subset V_n$, for every $n\in\N$. We then have
\[
\varphi^{-1}(F_n)\subset\varphi^{-1}(V_n),\ \text{for every}\ n\in\N.
\]

Since the mapping $\varphi$ has the DMP, by (i) of the definition of DMP, $\varphi$ admits an l.s.c.\ multi-selection $\psi:X\to\mathscr{C}(Y)$. Since $X$ is $\tau$-PF normal, $\psi$ has a u.s.c. multi-selection $\phi:X\to\mathscr{C}(Y)$ (see \cite[Theorem 4.3]{nedev}). We then have
\[
\phi^{-1}(F_n)\subset\psi^{-1}(F_n)\subset\varphi^{-1}(F_n)\subset\varphi^{-1}(V_n),\ \text{for every}\ n\in\N.
\]

The family $\{A_n:n\in\N\}$, with $A_n=\phi^{-1}(F_n)$, is an increasing closed cover of $X$ such that $A_n\subset\varphi^{-1}(V_n)$, for every $n\in\N$.
\end{proof}

We are now ready to prove Theorem \ref{pf-thm}.

\begin{proof}[Proof of Theorem \ref{pf-thm}]
We are going to first prove (b), and then (a).\medskip

(b) Since $\alpha<1$, there exists $\gamma\in\R$ such that $\alpha<\gamma<1$. Then, $\sum^\infty_{i=0}\gamma^i<\infty$ (i.e. the series $\sum^\infty_{i=0}\gamma^i$ converges). So, take $\delta$ such that $\sum_{i=0}^{\infty}\gamma^i< \delta$. This $\delta$ works.
Indeed, let $r>0$ and $g:X\to Y$ be a continuous $r$-selection for $\varphi$. We shall define by induction a sequence of continuous maps $f_n:X\to Y$, $n<\omega$, with $f_0=g$, satisfying the conditions for all $n\in\N$ and $x\in X$:
\begin{eqnarray}
\label{eq1} d(f_n(x),\varphi(x)) & < & \gamma^nr,\\
\label{eq2} d(f_n(x),f_{n+1}(x)) & \leq &\gamma^nr.
\end{eqnarray}

This will be sufficient because by \eqref{eq2}, $\{f_n:n<\omega\}$ is a Cauchy sequence, so it must converge to some continuous map $f:X\to Y$. By \eqref{eq1}, $f(x)\in\varphi(x)$, for every $x\in X$ and, by \eqref{eq2}, $d(g(x),f(x))<\delta r$, $x\in X$.\medskip

Let $f_0=g$, which satisfies \eqref{eq1}. Suppose that $f_n:X\to Y$ has been constructed for some $n\geq0$, and let us construct $f_{n+1}$. By the inductive assumption, $f_n$ is a continuous $\gamma^nr$-selection for $\varphi$. Define a mapping $\Phi_{n+1}:X\to \mathscr{F}(Y)$ by
\[
\Phi_{n+1}(x)=\overline{\varphi(x)\cap B_{\gamma^nr}(f_n(x))},\ x\in X.
\]

Since $\varphi$ has the DMP, the mapping $\Phi_{n+1}$ has an l.s.c.\ multi-selection $\psi_{n+1}:X\to\mathscr{C}(Y)$. Since $X$ is $\tau$-PF normal, by \cite[Theorem 4.1]{nedev}, there exists a continuous map $f_{n+1}:X\to Y$ such that
\[
f_{n+1}(x)\in\overline{\conv(\psi_{n+1}(x))},\ x\in X.
\]

Since $\psi_{n+1}(x)\subset\Phi_{n+1}(x)\subset\overline{\conv(\varphi(x)\cap B_{\gamma^nr}(f_n(x)))}$, we also have that

\[
f_{n+1}(x)\in\overline{\conv(\psi_{n+1}(x))}\subset\overline{\conv(\varphi(x)\cap B_{\gamma^nr}(f_n(x)))},\ x\in X.
\]

By $\alpha$-paraconvexity of $\varphi(x)$, we get that
\[
d(f_{n+1}(x),\varphi(x))\leq\alpha\gamma^n r<\gamma^{n+1}r,\ \text{for all}\ x\in X,
\]
which is \eqref{eq1}. Clearly, we also have
\[
d(f_n(x),f_{n+1}(x))\leq \gamma^nr,\ \text{for all}\ x\in X,
\]
which is \eqref{eq2}.\medskip

(a) Take $\lambda\geq2$ such that $\varphi(x)\cap B_{\lambda}(0)\neq\emptyset$ for some $x\in X$, where $0$ is the origin of $Y$, and let $\beta=\max\{\delta,\lambda\}$, where $\delta$ is as in (b) applied to $\alpha$. Let
\[
V_n=B_{\beta^{n+1}}(0),\ \text{for each}\ n\in\N.
\]

Then, the family $\{V_n:n\in\N\}$ is an increasing open cover of $Y$. By Proposition \ref{prop1}, there is an increasing closed cover $\{A_n:n\in\N\}$ of $X$ such that
$A_n\subset\varphi^{-1}(V_n)$, for every $n\in\N$. Since $X$ is normal, there is an increasing family $\{G_n:n\in\N\}$ of open sets such that $A_n\subset G_n\subset \overline{G}_n\subset\varphi^{-1}(V_n)$, $n\in\N$. Since $\overline{G}_n\subset \varphi^{-1}(V_n)$ for every $n\in\N$, there exists an open $F_{\sigma}$-set $U_n\subset X$ such that $\overline{G}_n\subset U_n\subset\varphi^{-1}(V_n)$. We are going to construct by induction partial selections $g_n:U_n\to V_n$ for $\varphi_n=\varphi\uhr U_n$ such that $g_{n+1}\uhr\overline{G}_n=g_n\uhr\overline{G}_n$, $n\in\N$. To this end, let $g(x)=0$ for all $x$, and take $r$ to be $\beta$. Since $\varphi_1=\varphi\uhr U_1$ has the DMP (see Remark \ref{remark1}), it follows from (b) that it has a continuous selection $g_1:U_1\to V_1$. Suppose that $g_n:U_n\to V_n$ is a continuous selection for $\varphi_n:U_n\to \mathscr{F}_{\alpha}(Y)$. Next, define a mapping $\psi_{n+1}:U_{n+1}\to \mathscr{F}_{\alpha}(Y)$ by
\begin{displaymath}
\psi_{n+1}(x)=\left\{\begin{array}{ll}
      \{g_n(x)\} & \textrm{if $x\in\overline{G}_n$}\\
      \varphi(x) & \textrm{if $x\in U_{n+1}\setminus\overline{G}_n.$}
      \end{array}\right.
\end{displaymath}
By Claim \ref{claim}, $\psi_{n+1}$ has the DMP, and it follows from (b), with $g(x)=0$ for all $x$, and substituting $r$ with $\beta^{n+1}$, that $\psi_{n+1}$ has a continuous selection $g_{n+1}:U_{n+1}\to V_{n+1}$. In particular, $g_{n+1}$ is a continuous selection for $\varphi_{n+1}$ and $g_{n+1}\uhr\overline{G}_n=g_n\uhr\overline{G}_n$. This completes the construction of the partial selections $g_n$, $n\in\N$. Finally, define
$f_n=g_n\uhr\overline{G}_n$, $n\in\N$. Then, $f_n$ is a continuous selection for $\varphi\uhr\overline{G}_n$ such that $f_{n+1}\uhr\overline{G}_n=f_n$, $n\in\N$. This allows us to define a map $f:X\to Y$ by $f\uhr\overline{G}_n=f_n$, and this $f$ is a continuous
selection for $\varphi$. The proof is completed.
\end{proof}

It is worth mentioning that Theorem \ref{pf-thm}, together with Examples \ref{example1} and \ref{cwn-exple}, implies the $\tau$-paracompact normal and $\tau$-collectionwise normal versions of the paraconvex-valued selection theorems for l.s.c.\ mappings; i.e. \cite[Theorem 3.2]{loufouma} and \cite[Theorem 2.1]{loufouma} respectively. It also implies the following result which is a $\tau$-PF-normal version of theorems \cite[Theorem 2.1]{loufouma} and \cite[Theorem 2.1]{michael2}; and is a generalization of \cite[Theorem IV]{kando} (see also \cite[Theorem 4.1]{nedev} in the special case of compact-valued mappings) in terms of paraconvex-valued mappings defined on $\tau$-PF-normal spaces.

\begin{corollary}
\label{pf-thm0}For a Banach space $Y$, with $w(Y)\leq \tau$, and $0\leq\alpha<1$, the following hold\textup{:}
\begin{itemize}
\item[(a)] Whenever $X$ is a $\tau$-PF-normal space, every l.s.c.\ $\varphi:X\to \mathscr{C}_{\alpha}(Y)$ has a continuous selection.
\item[(b)] There exists $\delta=\delta(\alpha)>0$ depending only on $\alpha$ such that if $X$ is a $\tau$-PF-normal space, $\varphi:X\to\mathscr{C}_{\alpha}(Y)$ is l.s.c., and $g:X\to Y$ is a continuous $r$-selection for $\varphi$ for some $r>0$, then $\varphi$ has a continuous selection $f:X\to Y$ with $d(g(x),f(x))<\delta r$, for all $x\in X$.
\end{itemize}
\end{corollary}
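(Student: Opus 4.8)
The plan is to derive Corollary \ref{pf-thm0} directly from Theorem \ref{pf-thm}. Since every compact set is closed, $\mathscr{C}_{\alpha}(Y)\subset\mathscr{F}_{\alpha}(Y)$, so an l.s.c.\ mapping $\varphi:X\to\mathscr{C}_{\alpha}(Y)$ is in particular $\mathscr{F}_{\alpha}(Y)$-valued. Hence, once we check that such a $\varphi$ has the DMP, parts (a) and (b) of the corollary follow word for word from parts (a) and (b) of Theorem \ref{pf-thm}, with the very same constant $\delta=\delta(\alpha)$. Everything thus reduces to verifying the DMP for compact-valued l.s.c.\ mappings.

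For condition $(i)$ of the DMP I would take $\psi=\varphi$ itself, which is an l.s.c.\ multi-selection of $\varphi$ with values in $\mathscr{C}(Y)$. For condition $(ii)$, let $\varepsilon>0$, let $U\subset X$ be a cozero-set, and let $g:U\to Y$ be a continuous $\varepsilon$-selection for $\varphi\uhr U$. Define $\Phi:U\to 2^Y$ by $\Phi(x)=\overline{\varphi(x)\cap B_{\varepsilon}(g(x))}$, $x\in U$. Exactly as in Examples \ref{example1} and \ref{cwn-exple}, this $\Phi$ is l.s.c.\ and has non-empty values, the intersection being non-empty because $d(g(x),\varphi(x))<\varepsilon$. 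The new and decisive point is that, since each $\varphi(x)$ is now compact, $\Phi(x)$ is a closed subset of $\varphi(x)$ and is therefore compact; that is, $\Phi:U\to\mathscr{C}(Y)$. Taking $\psi=\Phi$ then satisfies the inclusion required in $(ii)$ trivially, so $\varphi$ has the DMP and Theorem \ref{pf-thm} applies.

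The step I expect to be the crux is not so much an obstacle as the observation that makes the argument short: compactness of the values lets $\Phi$ itself serve as the desired l.s.c.\ $\mathscr{C}(Y)$-valued mapping in $(ii)$. In the genuinely closed-valued settings of Examples \ref{example1} and \ref{cwn-exple} one first had to extract a compact-valued l.s.c.\ multi-selection of $\Phi$ by invoking the multi-selection theorems of Michael and Choban, or Nedev's results via the SFP; here no such machinery is needed. All the real topological content --- the use of $\tau$-PF-normality of $X$ and of $w(Y)\leq\tau$ --- has already been carried out inside the proof of Theorem \ref{pf-thm}, so nothing further is required.
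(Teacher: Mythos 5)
Your proposal is correct, and it follows the paper's overall strategy --- reduce the corollary to Theorem \ref{pf-thm} by checking that a compact-valued l.s.c.\ $\varphi$ has the DMP --- but it differs at the one substantive step. The paper verifies condition $(ii)$ of the DMP by mimicking Example \ref{example1}: it forms $\Phi(x)=\overline{\varphi(x)\cap B_{\varepsilon}(g(x))}$ and then invokes Nedev's compact-valued multi-selection theorem (in place of the Michael--Choban theorem used in Example \ref{example1}) to extract an l.s.c.\ $\mathscr{C}(Y)$-valued multi-selection of $\Phi$. You instead observe that no extraction is needed: since $\varphi(x)$ is compact and closed, $\Phi(x)$ is a closed subset of $\varphi(x)$, hence itself compact, so $\Phi$ already is the required l.s.c.\ $\mathscr{C}(Y)$-valued mapping (its lower semi-continuity and non-emptiness follow from the standard Michael lemma on intersecting an l.s.c.\ mapping with the open balls $B_{\varepsilon}(g(x))$, exactly as used in Examples \ref{example1} and \ref{cwn-exple}, and condition $(i)$ holds trivially with $\psi=\varphi$). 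This buys a genuinely more elementary proof of the DMP in the compact-valued case: it needs no multi-selection machinery, no completeness of $Y$ at this step, and no properties of the cozero-set $U$ beyond what is already used for $\Phi$ being well defined; the hypotheses of $\tau$-PF-normality and $w(Y)\leq\tau$ are then consumed only inside Theorem \ref{pf-thm}, as you say. The paper's phrasing, by contrast, keeps the corollary formally parallel to Examples \ref{example1} and \ref{cwn-exple} at the cost of citing an unnecessary external theorem. Your argument is complete as stated.
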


\begin{proof}
It is enough to show that $\varphi$ has the DMP, and the result will follow from Theorem \ref{pf-thm}. The proof of this fact is similar to that of Example \ref{example1}, but now using \cite[Theorem 4.3]{nedev} instead of \cite[Theorem 1.1]{michael5}.
\end{proof}

\begin{remark}

As Theorem \ref{pf-thm}, Examples \ref{example1}, \ref{cwn-exple} and Corollary  \ref{pf-thm0} show, the DMP plays the role of a unified approach to ``paraconvex-valued" selection theorems. The Selection Factorization Property due to S.\ Nedev \cite{nedev} plays a similar role for ``convex-valued" selection theorems. Thus, as the referee remarked to the author, it is natural to ask whether every mapping having the SFP has also the DMP; or more interestingly, if Theorem \ref{pf-thm} will remain true if one replaces DMP with SFP. Regarding this, it is to be noted that Proposition \ref{prop1} remains true if DMP is replaced by SFP. Also, the statement in Remark \ref{remark1} is true if DMP is replaced by SFP; i.e.\ if $X$ is a normal space, $U\subset X$ is an open $F_{\sigma}$-set, and $\varphi:X\to\mathscr{F}(Y)$ has the SFP, then $\varphi\uhr U$ also has the SFP. However, it is not evident that a mapping having the SFP satisfies the condition $(b)$ of the definition of DMP, i.e., the following question is open: if $X$ is normal, $\varphi:X\to\mathscr{F}(Y)$ has the SFP, $\varepsilon>0$, $U\subset X$ is a cozero-set, $g:U\to Y$ is a continuous $\varepsilon$-selection for $\varphi\uhr U$. Does the mapping $\Phi:U\to\mathscr{F}(Y)$ defined by $\Phi(x)= \overline{\varphi(x)\cap B_{\varepsilon}(g(x))}$, $x\in U$, admit an l.s.c.\ multi-selection $\psi:U\to \mathscr{C}(Y)$?
\end{remark}

\section{Acknowledgment}

The author would like to express his deep gratitude to Professor V. Gutev for guiding him
in the preparation of this paper. The author is also grateful to the referee for his useful comments and suggestions.
\vspace{1cm}

\newcommand{\noopsort}[1]{} \newcommand{\singleletter}[1]{#1}
\providecommand{\bysame}{\leavevmode\hbox to3em{\hrulefill}\thinspace}
\providecommand{\MR}{\relax\ifhmode\unskip\space\fi MR }
\providecommand{\MRhref}[2]{%
  \href{http://www.ams.org/mathscinet-getitem?mr=#1}{#2}
}
\providecommand{\href}[2]{#2}

\end{document}